\title{Structural Completeness in bi-$\mathsf{IPC}$}
\author{Rodrigo Nicolau Almeida and Nick Bezhanishvili}
\date{\today}
\begin{document}

\maketitle

The purpose of this note\footnote{This work was originally developed in 2023, and was as of yet available only at the first author's website.  We have uploaded this note to make the results more easily accessible.} is to prove some basic results concerning structural completeness in bi-intuitionistic logic, and identify the source of such negative results. For a general introduction to the concepts of structural completeness in varieties, we point the reader to \cite{Rybakov1997-or}. We assume throughout familiarity with algebraic arguments, as well as with Esakia duality for bi-Heyting algebras (see \cite{Esakiach2019HeyAlg}).

Most of our notation is standard: we use $\mathbb{Q},\mathbb{V},\mathbb{S},\mathbb{P},\mathbb{P}_{U}$ to denote the closure operators corresponding to quasivarieties, varieties, closure under subalgebras, products and ultraproducts, respectively. We reserve boldface symbols (e.g. $\mathbf{K},\mathbf{H}$) for classes of algebras, and denote by $\alg{A},\alg{B}$ algebras. We write $\mathbf{n}$ for the bi-Heyting algebra with $n$ elements. We use symbols $L,L'$ for logics, especially those in $\mathsf{Ext}(\mathsf{biIPC})$, i.e., bi-intuitionistic logic. We write $\mathsf{CPC}$ for classical logic, understood as a bi-intuitionistic logic. Given a class of (bi-Heyting) algebras $\mathbf{K}$ we write $\mathsf{Log}(\mathbf{K})$ for the logic of $\mathbf{K}$, the set of formulas valid on all these algebras. We denote by $\Lambda(\mathbf{biHA})$ the lattice of extensions of bi-Heyting algebras.

The first fact we will need results from work of Davey, Kowalwski and Taylor \cite{Davey2021}:

\begin{fact}
The variety $\mathbb{V}(\bf{3})$ is the unique cover of the variety of Boolean algebras in $\Lambda(\mathbf{biHA})$.
\end{fact}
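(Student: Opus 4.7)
The argument splits into two steps: (I) any variety $\mathbf{V}$ of bi-Heyting algebras strictly above $\mathbb{V}(\mathbf{2})$ contains $\mathbf{3}$; and (II) $\mathbb{V}(\mathbf{3})$ itself covers $\mathbb{V}(\mathbf{2})$. Together, any cover of $\mathbb{V}(\mathbf{2})$ must contain $\mathbb{V}(\mathbf{3})$ by (I) and then coincide with it by (II), giving uniqueness.

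Step (II) follows from Jónsson's lemma, which applies since bi-Heyting algebras are congruence distributive: because $\mathbf{3}$ is finite, the subdirectly irreducibles of $\mathbb{V}(\mathbf{3})$ all lie in $\mathbb{HS}(\mathbf{3})$, which modulo isomorphism is $\{\mathbf{1}, \mathbf{2}, \mathbf{3}\}$ (with $\mathbf{1}$ the trivial algebra). Both $\mathbf{2}$ and $\mathbf{3}$ are simple as bi-Heyting algebras: the Heyting congruence on $\mathbf{3}$ that would merge its two top elements is destroyed by the co-implication $\setminus$, since $1 \setminus m = 1 \neq 0 = m \setminus m$ for the middle element $m$. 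Hence the subvarieties of $\mathbb{V}(\mathbf{3})$ form the chain: trivial variety, $\mathbb{V}(\mathbf{2})$, $\mathbb{V}(\mathbf{3})$, with nothing strictly between the latter two.

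For step (I), fix $\mathbf{V} \supsetneq \mathbb{V}(\mathbf{2})$ and pick a non-Boolean subdirectly irreducible $\alg{B} \in \mathbf{V}$ (which exists by Birkhoff). Via bi-Esakia duality, the task reduces to exhibiting a surjective bi-Esakia p-morphism $f : X(\alg{B}) \twoheadrightarrow \{0 < 1\}$, since the two-element chain is dual to $\mathbf{3}$ and such $f$ dualizes to an embedding $\mathbf{3} \hookrightarrow \alg{B}$. Such $f$ corresponds to a partition $X(\alg{B}) = U \sqcup D$ into a non-empty clopen upset $U$ and its clopen downset complement $D$, subject to: every $d \in D$ has some $u \in U$ with $d \leq u$ (the Heyting p-morphism condition), and every $u \in U$ has some $d \in D$ with $d \leq u$ (the co-Heyting condition). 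Non-Booleanness of $\alg{B}$ provides a comparable pair in $X(\alg{B})$, while subdirect irreducibility forces $X(\alg{B})$ to be order-connected, as otherwise a clopen partition into up-and-down-closed pieces would dualize to a non-trivial product decomposition of $\alg{B}$. The \textbf{main obstacle} is producing the partition itself: in the finite case, splitting $X(\alg{B})$ into non-minimal and minimal points works by order-connectedness and the descending chain condition, but the infinite case requires refining a Priestley-separating clopen upset via compactness and the Esakia conditions to secure both cofinality and coinitiality.
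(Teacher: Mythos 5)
The paper offers no proof of this Fact --- it is imported from Davey--Kowalski--Taylor --- so your proposal can only be judged on its own terms. Your overall architecture is the right one, and Step (II) is correct and complete: Jónsson's lemma applies, and your observation that $1\setminus m = 1 \neq 0 = m\setminus m$ kills the non-trivial Heyting congruence of $\mathbf{3}$ is exactly why $\mathbf{3}$ is simple as a bi-Heyting algebra, so the subvarieties of $\mathbb{V}(\mathbf{3})$ form the three-element chain you describe.

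Step (I) contains a genuine gap, and the ``main obstacle'' you flag is not a technicality that compactness will resolve: the claim that every non-Boolean subdirectly irreducible bi-Heyting algebra maps dually onto the two-element chain is false. Your partition $X = U \sqcup D$ exists precisely when there is a clopen upset $U$ with $\max X \subseteq U$ and $U \cap \min X = \emptyset$ (equivalently, when the algebra has an element $m \notin \{0,1\}$ with $\neg m = 0$ and $\mathord{\sim} m = 1$, and these are the only possible three-element subalgebras). This is impossible as soon as some point of $X$ is simultaneously maximal and minimal, and subdirect irreducibility does not exclude such points in the infinite case. Concretely, let $X$ be the one-point compactification of the infinite fence $a_0 < b_0 > a_1 < b_1 > a_2 < \cdots$ (all $a_n, b_n$ topologically isolated, the point $z$ at infinity order-incomparable to everything else). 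One checks this is a bi-Esakia space whose only proper closed up-and-down-closed subsets are $\emptyset$ and $\{z\}$; hence its dual algebra is subdirectly irreducible and non-Boolean, yet $z \in \max X \cap \min X$, so $\mathbf{3}$ does not embed into it. Note that this $X$ is ``order-connected'' in the only sense your indecomposability argument delivers --- it admits no clopen partition into up-and-down-closed pieces, because $\{z\}$ is closed but not open --- so order-connectedness does not rule out order-isolated points. To repair Step (I) one must either locate $\mathbf{3}$ as a homomorphic image of a subalgebra of an ultrapower (or of a different member of the variety), or argue equationally, as Davey--Kowalski--Taylor do, that $\mathbf{3} \in \mathbb{V}(\mathbf{A})$ for every non-Boolean $\mathbf{A}$ rather than $\mathbf{3} \in \mathbb{S}(\mathbf{A})$ for every non-Boolean subdirectly irreducible $\mathbf{A}$.
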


We also recall the following theorem (see e.g. \cite[Theorem 2.6]{bergmanstructuralcompleteness} for a more general statement):

\begin{theorem}
    Let $\bf{K}$ be a variety of bi-Heyting algebras. Then the following are equivalent:
    \begin{enumerate}
        \item $\mathsf{Log}(\bf{K})$ is structurally complete;
        \item Whenever $\bf{G}\subseteq \bf{K}$ is a proper quasivariety, then $\mathbb{V}(\bf{G})$ is a proper subvariety of $\mathbb{V}(\bf{K})$.
    \end{enumerate}
\end{theorem}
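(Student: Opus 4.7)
The plan is to leverage the standard algebraic characterization of structural completeness: for a variety $\mathbf{K}$ of bi-Heyting algebras, $\mathsf{Log}(\mathbf{K})$ is structurally complete if and only if $\mathbb{Q}(F_{\mathbf{K}}(\omega)) = \mathbf{K}$, where $F_{\mathbf{K}}(\omega)$ is the free algebra on countably many generators. The reason is that admissible rules of $\mathsf{Log}(\mathbf{K})$ correspond precisely to quasi-identities holding in $F_{\mathbf{K}}(\omega)$ (the free algebra realises all substitutions), while derivable rules correspond to quasi-identities holding throughout $\mathbf{K}$. Once this reformulation is in place, the proof reduces to a straightforward lattice argument about the interplay between $\mathbb{Q}$ and $\mathbb{V}$.

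For the direction (1) $\Rightarrow$ (2), I would argue contrapositively. Suppose there is a proper quasivariety $\mathbf{G} \subsetneq \mathbf{K}$ with $\mathbb{V}(\mathbf{G}) = \mathbf{K}$. The free algebra $F_{\mathbf{K}}(\omega)$ coincides with the free algebra of $\mathbb{V}(\mathbf{G})$, and by the standard construction of free algebras as subalgebras of powers of generators, one obtains $F_{\mathbf{K}}(\omega) \in \mathbb{S}\mathbb{P}(\mathbf{G}) \subseteq \mathbb{Q}(\mathbf{G}) = \mathbf{G}$. Consequently $\mathbb{Q}(F_{\mathbf{K}}(\omega)) \subseteq \mathbf{G} \subsetneq \mathbf{K}$, which by the reformulation above contradicts structural completeness.

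For (2) $\Rightarrow$ (1), I would set $\mathbf{G} := \mathbb{Q}(F_{\mathbf{K}}(\omega))$. Since the countably generated free algebra generates its variety, we have $\mathbb{V}(\mathbf{G}) = \mathbb{V}(F_{\mathbf{K}}(\omega)) = \mathbf{K}$. Condition (2) then forbids $\mathbf{G}$ from being a proper subquasivariety, forcing $\mathbb{Q}(F_{\mathbf{K}}(\omega)) = \mathbf{K}$, hence structural completeness.

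The main obstacle is not in the lattice bookkeeping, which is essentially formal, but in confirming that the reformulation of structural completeness via the free algebra applies cleanly to bi-Heyting algebras. Because the bi-Heyting signature is finitary and admits a well-behaved countably generated free algebra (which follows from Esakia duality for bi-Heyting algebras, already invoked in the paper), the usual argument identifying admissible rules with quasi-identities of $F_{\mathbf{K}}(\omega)$ transfers without modification. I would either cite \cite{bergmanstructuralcompleteness} directly for this equivalence, or sketch it in a line, and then present the two short implications above.
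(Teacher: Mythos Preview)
The paper does not actually prove this theorem: it simply records it with a pointer to \cite[Theorem 2.6]{bergmanstructuralcompleteness}. Your proposal therefore supplies more than the paper does, and what you supply is the standard argument behind that citation. The two implications via $\mathbb{Q}(F_{\mathbf{K}}(\omega))=\mathbf{K}$ are correct as written.

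Two small remarks. First, in the step showing $F_{\mathbf{K}}(\omega)\in\mathbf{G}$ you appeal to ``free algebras as subalgebras of powers of generators''; the cleaner justification is that a quasivariety $\mathbf{G}$ has free algebras and these coincide with the free algebras of $\mathbb{V}(\mathbf{G})$, so $F_{\mathbf{K}}(\omega)=F_{\mathbb{V}(\mathbf{G})}(\omega)=F_{\mathbf{G}}(\omega)\in\mathbf{G}$ directly. Second, you do not need to invoke Esakia duality to secure the existence or behaviour of $F_{\mathbf{K}}(\omega)$: any variety in a finitary signature has free algebras on every set of generators by Birkhoff, and the identification of admissible rules with quasi-identities of the countably generated free algebra is pure universal algebra. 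These are cosmetic points; the argument is sound.
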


\begin{theorem}
For each $L\in \mathsf{Ext}(\mathsf{biIPC})$, if $L$ is structurally complete, then $L=\mathsf{CPC}$.
\end{theorem}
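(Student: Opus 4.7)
The plan is to prove the contrapositive: assuming $L$ is a consistent extension of $\mathsf{biIPC}$ with $L\neq\mathsf{CPC}$, I would show $L$ is not structurally complete. Let $\mathbf{K}$ be the corresponding variety of bi-Heyting algebras; then $\mathbf{BA}\subsetneq\mathbf{K}$, and the Fact yields $\mathbf{3}\in\mathbf{K}$. By the preceding Theorem, it suffices to exhibit a proper subquasivariety $\mathbf{G}\subsetneq\mathbf{K}$ with $\mathbb{V}(\mathbf{G})=\mathbf{K}$. My candidate is
\[
\mathbf{G}\;=\;\mathbb{Q}\bigl(\{\,\mathbf{2}\times\alg{A}:\alg{A}\in\mathbf{K}\,\}\bigr).
\]
The equality $\mathbb{V}(\mathbf{G})=\mathbf{K}$ is immediate, since every $\alg{A}\in\mathbf{K}$ is a homomorphic image of $\mathbf{2}\times\alg{A}\in\mathbf{G}$ via the second projection, so $\mathbf{K}\subseteq\mathbb{V}(\mathbf{G})\subseteq\mathbf{K}$.

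The crux is to show $\mathbf{G}\subsetneq\mathbf{K}$, for which I would argue $\mathbf{3}\notin\mathbf{G}$. The central observation is that $\mathbf{3}$ is \emph{simple} as a bi-Heyting algebra: the only non-trivial Heyting congruence on $\mathbf{3}$ collapses the middle element $a$ with $1$, but this relation fails to respect the coimplication, since $1\setminus 1=0$ while $1\setminus a=1$. It follows that any bi-Heyting morphism from $\mathbf{3}$ into a non-trivial Boolean algebra is either trivial (impossible, since it would identify $0$ with $1$) or an embedding (impossible, since subalgebras of Boolean algebras are Boolean while $\mathbf{3}$ is not). Since ultraproducts of copies of $\mathbf{2}$ are again $\mathbf{2}$, any element of $\mathbf{G}=\mathbb{I}\mathbb{S}\mathbb{P}\mathbb{P}_{U}(\{\mathbf{2}\times\alg{A}\})$ embeds into some product $\mathbf{2}^{J}\times\alg{C}$ with $\alg{C}\in\mathbf{K}$, and if $\mathbf{3}$ were in $\mathbf{G}$, its subdirect irreducibility would force $\mathbf{3}$ to embed into such a product; composing with the first projection then contradicts the observation above.

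The main obstacle is this last verification: unwinding the definition of $\mathbf{G}$ as $\mathbb{I}\mathbb{S}\mathbb{P}\mathbb{P}_{U}$ of the generating class and confirming that the obstruction extracted from the simplicity of $\mathbf{3}$ propagates through $\mathbb{S}$ and $\mathbb{P}$. Conceptually, the proof is driven by the coimplication: it is the coimplication that destroys the proper Heyting congruence on $\mathbf{3}$, and this additional rigidity is precisely what rules out structural completeness for every variety strictly above $\mathbf{BA}$.
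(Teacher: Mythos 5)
Your proof is correct, and its skeleton coincides with the paper's: inside a non-Boolean variety $\mathbf{K}\ni\mathbf{3}$, both exhibit the quasivariety generated by products of members of $\mathbf{K}$ with $\mathbf{2}$, note that it generates all of $\mathbf{K}$ as a variety via the projection, and show it is a proper quasivariety because $\mathbf{3}$ is missing. Where you genuinely diverge is in the verification that $\mathbf{3}\notin\mathbf{G}$. The paper restricts the generators to $\mathbf{K}_{SI}$, uses finiteness and subdirect irreducibility of $\mathbf{3}$ to place it in $\mathbb{S}\mathbb{P}_{U}$ of the generators, invokes Los' theorem to obtain an embedding of $\mathbf{3}$ into a single $\mathbf{H}\times\mathbf{2}$, and then argues dually: the Esakia space of $\mathbf{H}\times\mathbf{2}$ has an isolated point which has nowhere to go in the dual of $\mathbf{3}$. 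You instead observe that $\mathbf{3}$ is simple as a bi-Heyting algebra (the coimplication destroys the unique proper Heyting congruence), deduce that $\mathbf{3}$ admits no homomorphism into a non-trivial Boolean algebra, and rule out any embedding $\mathbf{3}\hookrightarrow\mathbf{2}^{J}\times\mathbf{C}$ by projecting onto the Boolean factor. Your route is purely algebraic, avoids both Esakia duality and the reduction to a single ultraproduct (which makes your appeal to subdirect irreducibility redundant rather than load-bearing), and works with the larger generating class $\{\mathbf{2}\times\mathbf{A}:\mathbf{A}\in\mathbf{K}\}$; the bookkeeping step you flag is indeed routine, since ultraproducts commute with finite products and ultrapowers of $\mathbf{2}$ are again $\mathbf{2}$. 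Both arguments ultimately isolate the same obstruction that the paper records afterwards as the admissible but non-derivable rule $\neg x=0\ \wedge\ \sim x=1\vdash 0=1$.
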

\begin{proof}
    Assume that $\bf{K}$ is any variety which is not Boolean. Hence by the above proposition, $\bf{3}\in \bf{K}$. Let $\mathbf{K}_{SI}$ be the subdirectly irreducible elements of $\mathbf{K}$. Now consider
    \begin{equation*}
        \mathbb{Q}(\{\alg{B} : \alg{B}\cong \alg{H}\times \mathbf{2}, \alg{H}\in \mathbf{K}_{SI}\}).
    \end{equation*}
Clearly the variety generated by this quasivariety is not proper. But the quasivariety itself is proper, since we claim that $\mathbf{3}$ is not there. To see this, note that if it were, since it is finite and subdirectly irreducible, it would belong to
\begin{equation*}
    \mathbb{SP}_{U}(\{\alg{B} : \alg{B}\cong \alg{H}\times \mathbf{2}, \alg{H}\in \mathbf{K}_{SI}\})
\end{equation*}
Now note that $\mathbf{3}$ will embed into the ultraproduct of these algebras, which means that it embeds into one of them (by Los' theorem). But this in turn means that $\mathbf{3}\leq \alg{B}$ for one of these algebras. Thinking dually we can see that this is a contradiction: if $X$ is any such Esakia space, it will be isomorphic to $Y\sqcup \{\bullet\}$; but then the loose point can be mapped to nowhere in $\mathbf{3}$. This shows the quasivariety is proper, and so the logic is not structurally complete.   
\end{proof}

An example of one rule which is admissible but not derivable in every variety is the following
\begin{equation*}
    \neg x=0 \ \wedge \  \sim x=1 \vdash 0=1
\end{equation*}
where ``$=$" abbreviates the if and only if. To see why, note that this holds in every algebra of the form $\mathbf{B}\times \mathbf{2}$, but as a rule it cannot be valid in any variety, since $\mathbf{3}$ will be in all of these varieties, and it does not derive this rule. We note that incidentally this also shows a stronger result.

\begin{theorem}
    Let $L\in \mathsf{Ext}(\mathsf{biIPC}$). Then $L$ is passively structurally complete if whenever $\mathbf{A},\mathbf{B}\in \mathsf{Var}(L)$, then $Th_{P}(\mathbf{A})=Th_{P}(\mathbf{B})$ where $Th_{P}(\mathbf{A})$ is the positive existential theory.
\end{theorem}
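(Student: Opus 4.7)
The plan is to translate the algebraic hypothesis on positive existential theories into a statement about joint satisfiability of finite sets of formulas, from which passive structural completeness should read off almost directly. I would first recall that a rule $\varphi_1,\dots,\varphi_n / \psi$ is \emph{passive} when its premises admit no unifier modulo $L$: no substitution $\sigma$ makes every $\sigma(\varphi_i)$ a theorem of $L$. Algebraically this unfolds to the assertion that there is no valuation on the countably generated free algebra $F_\omega(L)\in\mathsf{Var}(L)$ at which all of the $\varphi_i$ simultaneously evaluate to $1$.

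The key observation is that the statement ``some valuation makes $\varphi_1=1,\dots,\varphi_n=1$'' in an algebra $\mathbf{A}$ is precisely the positive existential sentence $\exists \vec{x}\,\bigwedge_i (\varphi_i(\vec{x})=1)$, and hence belongs to $Th_P(\mathbf{A})$ exactly when the $\varphi_i$ are jointly satisfiable in $\mathbf{A}$. Under the hypothesis that $Th_P$ is constant on $\mathsf{Var}(L)$, the failure of this sentence in $F_\omega(L)$ transfers immediately to its failure in every $\mathbf{A}\in\mathsf{Var}(L)$.

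I would then close the argument by observing that a rule whose premises are not jointly satisfiable in \emph{any} algebra of the variety is vacuously sound on every such algebra, and therefore derivable in $L$. Consequently, every passively admissible rule is derivable, which is exactly passive structural completeness of $L$.

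The delicate step is the very first one, namely setting up the precise match between ``passive admissible'' and ``premises jointly unsatisfiable in $F_\omega(L)$''. This relies on the familiar correspondence between unifiers modulo $L$ and valuations on $F_\omega(L)$ sending each $\varphi_i$ to $1$, and it should be spelled out cleanly because the rest of the argument rides on it. Once this correspondence is in place, no further combinatorics on bi-Heyting algebras or Esakia duality enters; in particular, this direction uses nothing specific to bi-$\mathsf{IPC}$ beyond having an algebraic semantics.
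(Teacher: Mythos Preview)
Your argument is correct. The translation of ``passive'' into non-satisfiability of the premises in $F_\omega(L)$, the identification of joint satisfiability with a primitive positive sentence, and the transfer step via the hypothesis are all sound; the conclusion that a rule whose premises are unsatisfiable in every algebra of $\mathsf{Var}(L)$ is derivable then follows from algebraic completeness in the usual way.

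There is essentially nothing to compare with: the paper does not prove this theorem at all but simply cites \cite[Theorem~7.2]{Moraschini2020}. What you have written is the natural direct unfolding of one direction of that cited result, and indeed, as you note, nothing specific to bi-Heyting algebras is used. One remark worth making is that the result in the reference is in fact a biconditional, and the paper actually \emph{uses} the converse direction in the corollary that follows (exhibiting two algebras with different positive existential theories in order to conclude failure of passive structural completeness). So while you have proved exactly the implication that is literally stated, be aware that the theorem as written is weaker than what the paper subsequently relies on; if you intend your proof to support the corollary, you would also need to supply the other implication.
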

\begin{proof}
    See \cite[Theorem 7.2]{Moraschini2020}.
\end{proof}

\begin{corollary}
There is no passively structurally complete bi-IPC logic except for classical logic.
\end{corollary}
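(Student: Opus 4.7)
The plan is to reuse the explicit passive rule displayed in the discussion immediately preceding the theorem, rather than to invoke the theorem itself (which as stated supplies only a sufficient condition for passive structural completeness, not a full characterization). So the strategy is: exhibit a single rule that is passively admissible in every $L \in \mathsf{Ext}(\mathsf{biIPC})$ but fails to be derivable as soon as $L \ne \mathsf{CPC}$.

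First I would verify that the rule
\[
\neg x = 0 \ \wedge\ \sim x = 1 \vdash 0 = 1
\]
is \emph{passive} in every bi-intuitionistic logic $L$, and therefore vacuously admissible. For any term $t$, the premise $\neg t \leftrightarrow 0 \wedge \sim t \leftrightarrow 1$ is not a theorem of $L$, because it already fails in $\mathbf{2}$: the condition $\neg t = 0$ forces $t$ to be interpreted as $1$, and then $\sim 1 = 0 \ne 1$. Since $\mathbf{2}$ is a subalgebra of every nontrivial bi-Heyting algebra, no substitution instance of the premise is derivable in $L$, so the rule is passively admissible in $L$.

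Next, assuming $L \ne \mathsf{CPC}$, the variety $\mathsf{Var}(L)$ is not Boolean, so by the fact of Davey, Kowalski and Taylor cited at the start of the note, $\mathbf{3} \in \mathsf{Var}(L)$. Evaluating the rule at the middle element $m$ of the three-element chain gives $\neg m = 0$ and $\sim m = 1$, while $0 \ne 1$; hence the rule fails in $\mathbf{3}$ and is therefore not derivable in $L$. This exhibits a passive admissible rule of $L$ which is not derivable, so $L$ is not passively structurally complete.

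There is no real obstacle here; the corollary is essentially a repackaging of the example given just before the previous theorem, the only additional observation being that the premise has no derivable substitution instance (so the rule is \emph{passively} admissible, not merely admissible). I would deliberately avoid routing through the sufficient condition of the preceding theorem, since that would require the converse direction of Moraschini's result, which is not explicitly stated here.
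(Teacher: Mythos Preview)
Your argument is correct. It differs from the paper's proof in the route taken, though the underlying witness is the same rule $\neg x=0\ \wedge\ {\sim} x=1\vdash 0=1$.

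The paper appeals to the characterization of passive structural completeness via positive existential theories: it observes that $\mathbf{3}$ satisfies $\exists x(\neg x=0\wedge{\sim} x=1)$ while $\mathbf{3}\times\mathbf{2}$ does not, so these two members of $\mathsf{Var}(L)$ have distinct positive existential theories, and concludes that $L$ is not passively structurally complete. As you note, this uses the \emph{converse} of the preceding theorem as stated in the note (the full biconditional is in the cited reference). Your proof instead works directly from the definition: you show the premise is unsatisfiable already in $\mathbf{2}$, hence has no derivable substitution instance in any consistent $L$, so the rule is passive; and then that it fails in $\mathbf{3}\in\mathsf{Var}(L)$ whenever $L\neq\mathsf{CPC}$, so it is not derivable. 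Your version is thus more self-contained relative to what is actually stated in the note, at the cost of spelling out the passivity check; the paper's version is terser but leans on the unstated direction of Moraschini's characterization. Mathematically the two arguments are reformulations of one another: the positive existential sentence witnessing the discrepancy is exactly the existential closure of your rule's premise, and the failure in $\mathbf{3}\times\mathbf{2}$ is driven by the $\mathbf{2}$-factor you isolate.
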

\begin{proof}
    Note that $\mathbf{3}$ and $\mathbf{3}\times \mathbf{2}$ will not have the same positive existential theory, since one satisfies the antecedent of the above rule for some $x$, and the other does not.
\end{proof}

It seems that passivity is in fact the only thing stopping the variety generated by $\mathbf{3}$ from being structurally complete. For this, we first note the fact (which will follow from the analysis in the next chapter) that the algebra $\mathbf{3}\times \mathbf{2}\times \mathbf{2}$ is in fact isomorphic to the free algebra on the variety $\mathsf{Var}(\mathbf{3})$. We also make use of the following criterion by Stronkowski:

\begin{proposition}
    Let $\alg{C}$ be a subalgebra of the free algebra $\bf{F}_{\omega}$ of a variety $\bf{K}$. Then $\bf{K}$ is actively structurally complete if and only if for each $\alg{D}$, a subdirectly irreducible algebra, $\alg{D}\times \alg{C}\in \mathbb{Q}(\bf{F}_{\omega})$.
\end{proposition}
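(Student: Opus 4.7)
The plan is to translate active structural completeness into a statement about quasi-identities on $\bf{F}_{\omega}$, and then combine an elementary product-and-projection argument with the standard subdirect decomposition to reduce everything to what happens on subdirectly irreducibles.

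First I would set up the algebraic dictionary. A rule is admissible in $\bf{K}$ exactly when the associated quasi-identity $q$ is valid in $\mathbb{Q}(\bf{F}_{\omega})$, and it is active exactly when the premises of $q$ admit a simultaneous witness, which can be read as a valuation of the variables into the chosen subalgebra $\alg{C}\leq \bf{F}_{\omega}$. Thus active structural completeness becomes the assertion that every quasi-identity $q$ valid in $\mathbb{Q}(\bf{F}_{\omega})$ whose premise is satisfied by some valuation into $\alg{C}$ is valid throughout $\bf{K}$.

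For the ``only if'' direction I would argue by contraposition: suppose some subdirectly irreducible $\alg{D}\in \bf{K}$ has $\alg{D}\times \alg{C}\notin \mathbb{Q}(\bf{F}_{\omega})$. Then there is a quasi-identity $q$ valid in $\mathbb{Q}(\bf{F}_{\omega})$ that is refuted in $\alg{D}\times \alg{C}$ by some valuation $(v,w)$; the second coordinate $w$ is a valuation into $\alg{C}$ realising the premise of $q$, making $q$ active. Since $q$ is admissible but fails in the member $\alg{D}\times \alg{C}$ of $\bf{K}$, active structural completeness is violated.

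For the ``if'' direction, assume $\alg{D}\times \alg{C}\in \mathbb{Q}(\bf{F}_{\omega})$ for every subdirectly irreducible $\alg{D}\in \bf{K}$. Let $q$ be an active admissible quasi-identity, witnessed active by some valuation $w$ into $\alg{C}$. If $q$ fails in $\bf{K}$, the usual subdirect representation produces a subdirectly irreducible $\alg{D}\in \bf{K}$ and a valuation $v$ into $\alg{D}$ refuting $q$. The coordinatewise valuation $(v,w)$ then refutes $q$ in $\alg{D}\times \alg{C}\in \mathbb{Q}(\bf{F}_{\omega})$, contradicting the admissibility of $q$. The main obstacle I anticipate is calibrating the role of $\alg{C}$: the argument only goes through when ``active'' is precisely matched with ``satisfiable in $\alg{C}$''. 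If $\alg{C}$ is too small, some active rules acquire no witness in $\alg{C}$ and the ``if'' direction collapses, so the proposition really depends on choosing $\alg{C}$ rich enough to realise every relevant substitution --- typically the full free algebra or a carefully chosen free subalgebra on one generator, which is what makes the criterion concretely usable in situations like $\mathsf{Var}(\mathbf{3})$.
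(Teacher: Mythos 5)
The paper itself offers no proof of this proposition --- it is imported as a criterion of (Dzik--)Stronkowski --- so your argument can only be judged on its own terms. Its skeleton is the standard one and is essentially correct: the ``only if'' direction reads off an active admissible counterexample from any refutation of a quasi-identity valid in $\mathbb{Q}(\bf{F}_{\omega})$ inside some $\alg{D}\times \alg{C}$ (noting that $\alg{D}\times\alg{C}\in\bf{K}$ because $\bf{K}$ is a variety); the ``if'' direction pushes a putative failure of an admissible active quasi-identity through the subdirect decomposition to a subdirectly irreducible $\alg{D}\in\bf{K}$, pairs the refuting valuation with the activity witness, and contradicts $\alg{D}\times\alg{C}\in\mathbb{Q}(\bf{F}_{\omega})$. (Do make explicit that $\alg{D}$ must range over the subdirectly irreducible members \emph{of} $\bf{K}$; for an arbitrary subdirectly irreducible algebra outside $\bf{K}$ the ``only if'' direction is false.)

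The genuine gap is exactly the one you flag and then leave unresolved: your ``if'' direction starts from an active quasi-identity ``witnessed active by some valuation $w$ into $\alg{C}$,'' but activity only gives a witness in $\bf{F}_{\omega}$, and you end by saying the proposition ``really depends on choosing $\alg{C}$ rich enough.'' As stated the proposition allows an \emph{arbitrary} subalgebra $\alg{C}\leq\bf{F}_{\omega}$, so this step must be proved, and in the bi-Heyting setting it can be: the signature contains the constants $0,1$, so every subalgebra of $\bf{F}_{\omega}$ contains the subalgebra generated by the constants, and every unifiable premise has a \emph{ground} unifier. Concretely, if $x_{i}\mapsto t_{i}(\bar{y})$ satisfies the premise in $\bf{F}_{\omega}$, then each premise equation $s(\bar{t}(\bar{y}))\approx u(\bar{t}(\bar{y}))$ is an identity of $\bf{K}$ and therefore survives the substitution $\bar{y}\mapsto\bar{0}$; the valuation $x_{i}\mapsto t_{i}(0,\dots,0)$ then lands in $\alg{C}$. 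Equivalently, $\bf{F}_{\omega}$ retracts onto its smallest subalgebra and composing any unifier with this retraction produces one with values in $\alg{C}$; the existence of such a retraction onto (a subalgebra of) $\alg{C}$ is precisely the hypothesis under which Dzik and Stronkowski state the criterion for a general $\alg{C}$. Once this lemma is inserted, your argument is complete.
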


\begin{corollary}
    The variety $\mathsf{Var}(\mathbf{3})$ is actively structurally complete.
\end{corollary}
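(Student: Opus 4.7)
The plan is to apply Stronkowski's criterion above with $\alg{C}$ taken to be the free one-generated algebra $\mathbf{F}_1 \cong \mathbf{3} \times \mathbf{2}^2$ mentioned just before the statement; as the free algebra on one generator it is canonically a subalgebra of $\mathbf{F}_\omega$.

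I would first pin down the subdirectly irreducible algebras of $\mathsf{Var}(\mathbf{3})$. Combining the initial Fact (that $\mathsf{Var}(\mathbf{3})$ is the unique cover of Boolean algebras in $\Lambda(\mathbf{biHA})$) with J\'onsson's lemma -- bi-Heyting algebras are congruence distributive -- the SI algebras in $\mathsf{Var}(\mathbf{3})$ are exactly $\mathbf{2}$ and $\mathbf{3}$.

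Next, for each SI $\alg{D} \in \{\mathbf{2}, \mathbf{3}\}$ I would exhibit $\alg{D} \times \alg{C}$ as a bi-Heyting subalgebra of $\alg{C} \times \alg{C} \cong \mathbf{3}^2 \times \mathbf{2}^4$. The extra left-hand factor is absorbed by the spare coordinates available in $\alg{C}^2$: the algebra $\mathbf{2} \times \alg{C} \cong \mathbf{3} \times \mathbf{2}^3$ embeds diagonally into $\mathbf{3}^2 \times \mathbf{2}^4$ on the $\mathbf{3}$-coordinate and on one $\mathbf{2}$-coordinate, and $\mathbf{3} \times \alg{C} \cong \mathbf{3}^2 \times \mathbf{2}^2$ embeds diagonally on the $\mathbf{2}^2$ block. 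Both diagonal maps preserve $0, 1$ and all bi-Heyting operations coordinatewise. Since $\alg{C}^2 \leq \mathbf{F}_\omega^2$ and $\mathbb{Q}(\mathbf{F}_\omega)$ is closed under subalgebras and products, we conclude $\alg{D} \times \alg{C} \in \mathbb{Q}(\mathbf{F}_\omega)$, and the criterion delivers active structural completeness.

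The subtle point -- and the reason the criterion is strictly more flexible than quasi-varietal containment of the SIs themselves -- is that $\mathbf{3}$ is not a subalgebra of $\mathbf{F}_\omega$: no element of $\mathbf{F}_\omega$ can be simultaneously dense and codense, which is exactly the passive failure recorded earlier. Thus the naive route ``$\alg{D} \leq \mathbf{F}_\omega \Rightarrow \alg{D} \times \alg{C} \leq \mathbf{F}_\omega^2$'' is unavailable when $\alg{D} = \mathbf{3}$. It is the particular presence of both a $\mathbf{3}$-factor and two $\mathbf{2}$-factors in $\alg{C}$ that lets the diagonal absorb an extra $\mathbf{3}$ when forming $\mathbf{3} \times \alg{C}$, and this is the main technical content of the argument.
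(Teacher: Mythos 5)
Your proposal is correct and takes essentially the same route as the paper: both apply Stronkowski's criterion with $\alg{C}=\mathbf{F}(1)\cong\mathbf{3}\times\mathbf{2}\times\mathbf{2}$, obtain $\mathbf{2}$ and $\mathbf{3}$ as the subdirectly irreducibles via J\'onsson's lemma, and place $\alg{D}\times\mathbf{F}(1)$ inside $\mathbb{Q}(\mathbf{F}_{\omega})$ by embedding it into $\mathbf{F}(1)^{2}$. The only difference is presentational: the paper realizes these embeddings dually, by collapsing the surplus components of the bi-Esakia dual of $\mathbf{F}(1)^{2}$, whereas you write them out algebraically as diagonal maps on the product decomposition.
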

\begin{proof}
By Jonnson's lemma, we know that the subdirect irreducibles will be $\mathbf{3}$ and $\mathbf{2}$. Then we must show that:
\begin{enumerate}
    \item $\mathbf{3}\times \mathbf{F}(1)\in \mathbb{Q}(\bf{F}_{\omega})$;
    \item $\mathbf{2}\times \mathbf{F}(1)\in \mathbb{Q}(\bf{F}_{\omega})$.
\end{enumerate}
By duality, one can check that $\bf{F}(1)$ is a two element chain, with two spare points (by simply coloring the dual space and noting that this terminates as soon as this poset is obtained); taking the product with itself we obtain two, two element chains and four spare points, and certainly we can either forget one of the one lines and a point, or forget all but two of the points. This obtains that these algebras are in $\mathbb{Q}(\bf{F}(1))\subseteq \mathbb{Q}(\bf{F}_{\omega})$ (noting that $\mathbf{F}(1)$ embeds in $\mathbf{F}(\omega)$), which was to show.
\end{proof}

Using very similar arguments one can likewise show that $\mathsf{Var}(\bf{n})$ is actively structurally complete for each natural number $n$. In fact we can show:

\begin{theorem}
    The variety $\mathsf{Lin}$ of all linear bi-Heyting algebras is hereditarily actively structurally complete.
\end{theorem}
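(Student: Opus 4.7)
The plan is to apply Stronkowski's criterion separately to each subvariety of $\mathsf{Lin}$, mirroring the preceding corollary. First, I would classify the subvarieties of $\mathsf{Lin}$: since every finite SI linear bi-Heyting algebra is a chain $\mathbf{n}$ (and on a chain the co-Heyting operation is uniquely determined by the order), J\'onsson's lemma together with the cover fact quoted at the start of the note forces the subvariety lattice of $\mathsf{Lin}$ to be the usual G\"odel--Dummett lattice, namely $\mathsf{Var}(\mathbf{n})$ for each $n \geq 2$ together with $\mathsf{Lin}$ itself. Heredity of active structural completeness thus reduces to checking the property for each of these.

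Second, I would observe that $\mathsf{Lin}$ is locally finite: any chain generated by $k$ elements has at most $k+2$ elements, so $\mathbf{F}(k)$ in any subvariety is a subdirect product of the finitely many $k$-colored finite chains of length at most $\min(k+2,n)$. The Esakia dual of such $\mathbf{F}(k)$ is thus a finite disjoint union of chains, and taking $k$ large enough ensures that every SI chain of the subvariety appears as a component; in particular, for each subvariety $\mathbf{K}$ one can find $\alg{C}\subseteq \mathbf{F}_\omega(\mathbf{K})$ whose dual contains, as a clopen subposet, a chain of every length corresponding to an SI of $\mathbf{K}$.

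Third, I would apply Stronkowski's criterion to each such $\alg{C}$. For each SI chain $\mathbf{m}$, the map from $\alg{C}^*$ to the relevant $(m{-}1)$-chain component, acting as the identity on that component and collapsing all other points to the top of the chain, is an Esakia p-morphism; dually it gives an algebra embedding of $\mathbf{m}$ into $\alg{C}$, whence $\mathbf{m} \times \alg{C} \in \mathbb{S}(\alg{C}^2) \subseteq \mathbb{Q}(\mathbf{F}_\omega)$. In the finite-subvariety cases $\mathsf{Var}(\mathbf{n})$ the algebra $\alg{C}$ is finite and the argument is a direct copy of the preceding corollary.

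The main technical obstacle I anticipate is the case $\mathbf{K}=\mathsf{Lin}$: here the natural choice of $\alg{C}$ is essentially $\mathbf{F}_\omega$ itself, which is infinite, and Stronkowski's criterion requires a \emph{single} $\alg{C}$ to accommodate all infinitely many SI chains $\mathbf{m}$ at once. One must therefore check that the collapsing map extends to a continuous Esakia p-morphism on the full profinite dual of $\mathbf{F}_\omega$ and that each $\mathbf{m}$-chain survives as a clopen subposet of this dual; the former amounts to verifying that the relevant coloring-component is genuinely clopen (which follows because $\mathbf{F}_\omega$ is a direct limit of the finite $\mathbf{F}(k)$), and the latter is ensured by the description of $\mathbf{F}_\omega$'s dual as an inverse limit of finite disjoint unions of chains containing every finite chain length.
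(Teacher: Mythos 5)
Your overall strategy---reduce to the subvarieties $\mathsf{Var}(\mathbf{n})$ together with $\mathsf{Lin}$ itself and verify Stronkowski's criterion by dualizing---is the same as the paper's, which simply asserts that the argument of the preceding corollary applies to all chains. However, your third step contains a genuine error. The map that fixes one chain component of $\alg{C}^{*}$ and collapses everything else to its top point is not a morphism of bi-Esakia spaces: for a point $x$ outside the distinguished component, $f[{\downarrow}x]$ is the singleton consisting of the top point, while ${\downarrow}f(x)$ is the whole chain, so the down-set half of the bi-p-morphism condition fails. This is not a repairable defect in the choice of map: no chain $\mathbf{m}$ with $m\geq 3$ embeds into \emph{any} subalgebra $\alg{C}$ of $\mathbf{F}_{\omega}$. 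Indeed, $\alg{C}$ contains $0\neq 1$, so restricting the retraction of $\mathbf{F}_{\omega}$ onto $\mathbf{F}(0)=\mathbf{2}$ gives a surjective homomorphism $\alg{C}\to\mathbf{2}$; but there is no bi-Heyting homomorphism $\mathbf{3}\to\mathbf{2}$, since the middle element $a$ of $\mathbf{3}$ satisfies $\neg a=0$ and $\sim a=1$, forcing $h(a)=1$ and $h(a)=0$ simultaneously (this is exactly the passive rule discussed earlier in the note, and it is the very feature that separates the bi-Heyting case from the Heyting one, where your collapsing map would be legitimate). So the inference ``$\mathbf{m}\hookrightarrow\alg{C}$, hence $\mathbf{m}\times\alg{C}\in\mathbb{S}(\alg{C}^{2})$'' starts from a false premise.

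The correct execution---which is what the paper's corollary actually does---works with $\mathbf{m}\times\alg{C}$ as a whole: one exhibits a surjective bi-p-morphism from the dual of $\alg{C}^{k}$ (a disjoint union of copies of $\alg{C}^{*}$) onto the dual of $\mathbf{m}\times\alg{C}$, sending one copy of $\alg{C}^{*}$ identically to the $\alg{C}^{*}$ summand and using the components of another copy to cover the $(m-1)$-chain. Here each connected component of the domain must map \emph{onto} an entire component of the codomain (chains onto chains of equal or shorter length, isolated points onto isolated points), precisely because of the two-sided p-morphism condition; this is the constraint your argument needs to respect. Two further points: the subdirectly irreducible members of $\mathsf{Lin}$ are \emph{all} nontrivial chains, including infinite ones (every nontrivial linear bi-Heyting algebra is simple), so for $\mathbf{K}=\mathsf{Lin}$ you must either handle infinite $\alg{D}$ directly or invoke local finiteness and closure of quasivarieties under directed unions to reduce to finite chains; and your worry about clopenness in the profinite dual of $\mathbf{F}_{\omega}$ is orthogonal to the real difficulty, which is the component-wise surjectivity constraint just described.
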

\begin{proof}
    Note that its subvarieties will be precisely $\mathsf{Var}(\bf{n})$ for each natural number $n$, which as mentioned are actively structurally complete; hence it suffices to show that $\mathsf{Lin}$ will be actively structurally complete. This can be achieved by noting that the subdirect irreducibles of this logic will be precisely the chains, to which the previous argument easily applies.
\end{proof}

\begin{question}
    Can one provide a (Citkin-style) characterization of hereditarily active structurally complete extensions of $\mathsf{biIPC}$?
\end{question}

\printbibliography

\end{document}